\author{Louis Esperet\thanks{Laboratoire G-SCOP (Grenoble-INP, CNRS),
    Grenoble, France. This author was partially supported by ANR
    Project HEREDIA, under grant \textsc{anr-10-jcjc-0204-01}.}, Aline
  Parreau\thanks{LIFL (Universit\'e Lille 1, CNRS),
    Villeneuve d'Ascq, France.}}
\title{Acyclic edge-coloring using \\entropy compression}  
\date{\today}
\newenvironment{proof}{\par \noindent \textsc{Proof.} }{\hfill$\Box$\medskip}
\newtheorem{corollary}{Corollary}
\newtheorem{theorem}{Theorem}
\newtheorem{lemma}{Lemma}
\begin{document}

\maketitle

\begin{abstract}
An edge-coloring of a graph $G$ is {\em acyclic} if it is a proper
edge-coloring of $G$ and every cycle contains at least three
colors. We prove that every graph with maximum degree $\Delta$ has an
acyclic edge-coloring with at most $4\Delta-4$ colors, improving the
previous bound of $\lceil 9.62\,(\Delta-1)\rceil$. Our bound results
from the analysis of a very simple randomised procedure using the
so-called \emph{entropy compression method}. We show that the expected
running time of the procedure is $O(mn \Delta^2 \log \Delta)$, where
$n$ and $m$ are the number of vertices and edges of $G$. Such a
randomised procedure running in expected polynomial time was only
known to exist in the case where at least $16\Delta$ colors were
available.

\smallskip

Our aim here is to make a pedagogic
tutorial on how to use these ideas to analyse a broad range of graph
coloring problems. As an application, we also show that every graph with
maximum degree $\Delta$ has a star coloring with $2
\sqrt{2} \,\Delta^{3/2}+\Delta$ colors.
\end{abstract}

\section{Introduction}

An edge-coloring of a graph $G$ is {\em acyclic} if it is a proper
edge-coloring (adjacent edges have different colors) and every cycle
contains at least three colors. The smallest number of colors in an
acyclic edge-coloring of $G$ is the \emph{acyclic chromatic index} of
$G$, denoted by $a'(G)$. A corollary of a general theorem of Alon {\it
  et al.}~\cite{AMR91} from 1991, proved using the Lov\'asz Local
Lemma, is that if $G$ has maximum degree at most $\Delta$, then
$a'(G)\le 64 \Delta$. Molloy and Reed~\cite{MR98} improved the bound
to $16 \Delta$ in 1998, and this was recently improved by Ndreca {\it
  et al.}~\cite{NPS12} to $\lceil 9.62\,(\Delta-1)\rceil$, using a
stronger version of the Local Lemma due to Bissacot {\it et
  al.}~\cite{BFPS11}. Here we improve the bound further to
$4\Delta-4$. Fiam\v{c}ik \cite{F78} (in 1978) and Alon~\emph{et
  al.}~\cite{ASZ01} (in 2001) independently conjectured that the right
bound should be $\Delta+2$ (only one more than the bound of Vizing for
proper edge-coloring).

\smallskip

Let $\gamma>1$ be a fixed real and let
$K=\lceil(2+\gamma)(\Delta-1)\rceil$. We study the following simple
randomised algorithm. Order the edges of $G$ as $e_1,\ldots,e_m$, and
do the following at each step: take the non-colored edge with smallest
index, say $e_i$, and assign it a random color in $\{1,\ldots,K\}$
that does not appear on some edge adjacent to $e_i$ (this will be
slightly modified to allow an easier analysis). If some 2-colored
cycle is created, then uncolor $e_i$ and all the other edges on this
cycle (except two of them, we will understand why later). This way, we
maintain a partial edge-coloring that is acyclic at each step.

Our aim in this paper is to show that this algorithm terminates (every
edge is eventually colored) with positive probability, provided that
$\gamma$ (and thus, $K$) is large enough. This implies that $G$ has an
acyclic edge-coloring with at most $K$ colors.

\smallskip

To analyse the algorithm, we will use ideas that have been developed
to obtain bounds on nonrepetitive coloring of
graphs~\cite{DJKW12,GKM12}. The proofs in these two articles were
inspired by the algorithmic proof of the Local Lemma due
to Moser~\cite{M09} and Moser and Tardos~\cite{MT10}.

\smallskip

We want to insist on the fact that the generality of the work done in
Section~\ref{sec:analysis} makes the technique presented in this paper
(or rather, its precise analysis) easily extendable to a wide variety
of graph coloring problems. We could have made a more general
presentation throughout the whole paper instead of concentrating on
acyclic edge-coloring, but we felt that the paper would completely
loose its pedagogic side. Instead, we chose to present acyclic
edge-coloring first, then give another example (a generalisation of
star coloring, see Section~\ref{sec:star}), and then finally to
explain briefly how these examples could be encompassed in a wider
framework (see Subsection~\ref{sub:ext}). The algorithmic aspects are
analysed in Subsection~\ref{sub:algo}.

\subsection{The algorithm}

In order to analyse such a randomised algorithm running on a
deterministic instance, we will consider it instead as a deterministic
algorithm taking a large vector with random entries as input. Take
some large integer $t$, and consider a vector $F\in \{1,\ldots,\lceil
\gamma (\Delta-1)\rceil\}^t$. At step $i$ of the algorithm, the $i$-th
entry $F_i$ of $F$ will be used to assign a color to the non-colored
edge $e_j$ with smallest index as follows. Let $e_j=uv$, and let
$S=\{1,\ldots,K\}\setminus S'$, where $S'$ is the set of colors
appearing on edges $xy\ne uv$ such that

\begin{enumerate}[(1)]
\item $x=u$ or $x=v$, or
 \item edges $ux$ and $vy$ exist and have the same color.
\end{enumerate}

Observe that that the set $S$ has cardinality at least $\lceil \gamma
(\Delta-1)\rceil$: for any color counted in (2), some color $c$ is
counted at least twice in (1). Moreover since we maintain a proper
coloring at any step (see below), the color $c$ is counted precisely
twice. Hence, $S'$ contains no more colors than the number of edges
adjacent to $e_j$, and so $|S'|\le 2(\Delta-1)$.

We now assign the $F_i$-th smallest element of $S$ to $e_j$.  This
implies that the partial edge-coloring at any step (1) is proper and
(2) has no 2-colored 4-cycle. If a 2-colored cycle (of length at least
6) is created, say $e_{i_1},\ldots,e_{i_{2k}},e_{i_1}$ with
$e_{i_1}=e_j$ and $i_2<i_{2k}$, then uncolor all the edges on this
cycle except $e_{i_2}$ and $e_{i_3}$. Since $e_j$ is uncolored, the
partial edge-coloring remains acyclic.

\smallskip

The key of the analysis of the algorithm is to keep a (compact) record
of each step of the algorithm, in such way that at any step $i$, the
record until step $i$ and the partial coloring at step $i$ are enough
to deduce all the entries $F_j$, $j\le i$. In particular, the set of
all vectors $F$ such that the algorithm did not terminate before step
$t$, is smaller than the set of all possible records of all steps and
partial colorings at step $t$. The total number of choices for $F$ is
$\lceil \gamma (\Delta-1)\rceil^t$ and the number of partial colorings
of $G$ is independent of $t$ (it is at most $(K+1)^m$). Thus, if we
prove that the number of possible records is $o(\lceil \gamma
(\Delta-1)\rceil^t)$ when $t\rightarrow \infty$, this shows that the
algorithm terminates for some input vector. Equivalently, the
randomised version of the algorithm terminates with non-zero
probability.

\medskip

We now precise what we meant by \emph{compact record} of each step of
the algorithm. We define a vector $R$ having $t$ entries as
follows. Assume that at step $i$ of the algorithm, the edge $e_j$ was
colored and a 2-colored cycle (of length at least 6) was created, say
$C=e_{i_1},\ldots,e_{i_{2k}},e_{i_1}$ with $e_{i_1}=e_j$. Observe that
there are at most $(\Delta-1)^{2k-2}$ cycles of length $2k$ containing
$e_j$, so we can fix an order on such cycles (say the lexicographic
order), as $C_1,C_2,\ldots,C_s$, with $s\le (\Delta-1)^{2k-2}$. In this
case we uncolor all edges of $C$ except two, as described above, and
we set the $i$-th entry $R_i$ of $R$ to be equal to the pair
$(k,\ell)$, where $\ell \le s$ is the index of $C$ among all cycles of
length $2k$ containing $e_j$. If no 2-colored cycle is created at step
$i$, $R_i$ is left empty.

\medskip

The algorithm will be analysed in Section~\ref{sec:analysis}. In order
to find good asymptotics for the number of possible records, we will
need to count Dyck words with prescribed descent lengths which is
equivalent to counting rooted plane trees with prescribed number of
children.  The full generality of the counting lemma will be used to
obtain better bounds for the acyclic edge-coloring of graphs without
small cycles and for the star vertex-coloring of graphs
(Section~\ref{sec:star}).

\section{Analysis of the Algorithm}\label{sec:analysis}

We denote by $X_i$ the set of uncolored edges after step $i$, and by
$\Phi_i$ the partial coloring of $G$ after step $i$. Assume that for
some input vector $F$, the algorithm applied on the graph $G$ returns
output $(R,\Phi_t)$.  We now prove that $(R,\Phi_t)$ uniquely
determines $F$.

\begin{lemma}\label{lem:uncolored}
At each step $i$, the set $X_i$ is uniquely determined by the record
$(R_j)_{j \le i}$.
\end{lemma}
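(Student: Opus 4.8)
The claim is that $X_i$ (the set of uncolored edges after step $i$) is determined by $(R_j)_{j\le i}$. I would prove this by induction on $i$. The base case is $X_0 = E(G)$ (all edges uncolored), which needs no record at all.

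For the inductive step, assume $X_{i-1}$ is determined by $(R_j)_{j\le i-1}$. At step $i$, the algorithm picks the uncolored edge $e_j$ of smallest index; since $X_{i-1}$ is known, $e_j$ is determined. The edge $e_j$ gets colored, so it leaves the uncolored set. Now I consult $R_i$: if $R_i$ is empty, no 2-colored cycle was created, so no edge was uncolored, and hence $X_i = (X_{i-1}\setminus\{e_j\})$, which is determined. If $R_i = (k,\ell)$, then a 2-colored cycle $C$ of length $2k$ was created through $e_j$, and it was the $\ell$-th such cycle in the fixed (lexicographic) order among all $2k$-cycles of $G$ containing $e_j$. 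The crucial point is that this ordering is defined purely in terms of the fixed graph $G$ and the edge $e_j$ — it does not depend on the coloring or on the history — so from $(k,\ell)$ and $e_j$ alone we recover the edge set of $C$. Then the algorithm uncolors all edges of $C$ except two specified ones (the edges $e_{i_2}$ and $e_{i_3}$ in the notation of the excerpt, determined by the convention $i_2 < i_{2k}$ together with edge indices, all of which depend only on $G$ and the identity of the cycle $C$). Hence the set of newly uncolored edges is determined, and $X_i = (X_{i-1}\setminus\{e_j\}) \cup (\text{edges of } C \text{ minus the two kept edges})$ is determined by $(R_j)_{j\le i}$.

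The main thing to be careful about — and I'd expect this to be the only real subtlety — is verifying that every piece of data used to reconstruct the update depends only on $G$ and on quantities already reconstructed, never on the current coloring $\Phi_{i-1}$. Specifically: (i) the choice of $e_j$ as the smallest-index uncolored edge uses only $X_{i-1}$; (ii) the enumeration $C_1,\dots,C_s$ of $2k$-cycles through $e_j$ is a fixed combinatorial object of $G$; (iii) the rule ``keep $e_{i_2}$ and $e_{i_3}$, uncolor the rest'' selects specific edges of $C$ via an index-comparison convention ($i_2<i_{2k}$) that refers only to the positions of edges in the fixed ordering $e_1,\dots,e_m$. None of these consult colors. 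This is precisely why $R$ records $(k,\ell)$ rather than, say, the two colors of the cycle: the record is deliberately coloring-free so that $X_i$ is recoverable from $R$ alone, and the coloring $\Phi_i$ is only needed later (in the full reconstruction of $F$) to recover which color was assigned, not which edges are uncolored.

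Once this bookkeeping is in place the induction closes immediately, so the proof is short; the content is entirely in observing that the record was designed to make each step's effect on the uncolored set a deterministic function of $G$ and the record.
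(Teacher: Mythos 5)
Your proof is correct and follows essentially the same inductive argument as the paper: the base case plus, at each step, reading off $e_j$ from $X_{i-1}$ and using $R_i$ to determine which (if any) edges were uncolored. Your extra remarks about the coloring-independence of the cycle enumeration are a useful clarification but do not change the substance of the argument.
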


\begin{proof}
We prove the result by induction on $i$. First observe that the set
$X_1$ is the set of all edges except $e_1$. Assume now that $i\ge
2$. By the induction, $X_{i-1}$ is uniquely determined, so in
particular the uncolored edge with smallest index before step $i$, say
$e_j$, is uniquely determined. If $R_i$ is empty,
$X_i=X_{i-1}\setminus e_j$. If $R_i$ is not empty, say $R_i=(k,\ell)$,
we know which cycle of length $2k$ containing $e_j$ was a 2-colored
cycle, and which edges from this cycle were uncolored. So $X_i$ is
uniquely determined also in this case.
\end{proof}

\begin{lemma}\label{lem:input}
At each step $i$, the application that assigns to each input $(F_j)_{j\le
  i}$ the output $((R_j)_{j\le i},\Phi_i)$ is injective.
\end{lemma}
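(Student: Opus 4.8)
The plan is to prove this by induction on $i$, reconstructing the input entries one at a time from the output data. The base case $i=1$ is immediate: the algorithm colors $e_1$, no 2-colored cycle of length $\geq 6$ can yet exist (there is only one colored edge), so $R_1$ is empty, and $\Phi_1$ assigns some color to $e_1$; since the set $S$ of allowed colors for $e_1$ is simply $\{1,\ldots,K\}$ minus the colors forbidden by rules (1)--(2) applied to the empty-except-$e_1$ coloring (which is just $\{1,\ldots,K\}$ here, as nothing is colored), we recover $F_1$ as the rank of $\Phi_1(e_1)$ inside $S$.

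For the inductive step, suppose the claim holds for $i-1$, so that $(F_j)_{j\le i-1}$ is determined by $((R_j)_{j\le i-1},\Phi_{i-1})$. The key observation is that I need to recover $\Phi_{i-1}$ (and $X_{i-1}$) from the step-$i$ output. By Lemma~\ref{lem:uncolored}, $X_{i-1}$ and $X_i$ are both determined by $(R_j)_{j\le i}$, hence so is $e_j$, the uncolored edge of smallest index at step $i$. Now I distinguish two cases according to whether $R_i$ is empty. If $R_i$ is empty, then no cycle was uncolored at step $i$, so $\Phi_{i-1}$ agrees with $\Phi_i$ on every edge except $e_j$ (on which $\Phi_{i-1}$ is undefined and $\Phi_i$ is defined); thus $\Phi_{i-1}$ is recovered. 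If $R_i=(k,\ell)$, then I know exactly which cycle $C$ of length $2k$ through $e_j$ was the offending 2-colored cycle and which of its edges got uncolored (all but $e_{i_2},e_{i_3}$); the two retained edges already had their colors in $\Phi_i$, and the uncolored edges of $C$ can have their $\Phi_{i-1}$-colors reconstructed because $C$ was 2-colored --- its two colors are exactly the two colors appearing on the retained edges $e_{i_2}, e_{i_3}$, and the colors alternate around $C$, so knowing the colors of two consecutive(ish) edges pins down the whole alternation. On all edges outside $C$, $\Phi_{i-1}$ agrees with $\Phi_i$. Either way, $\Phi_{i-1}$ is now known, so by the induction hypothesis $(F_j)_{j\le i-1}$ is determined.

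It remains to recover $F_i$ itself. We know $\Phi_{i-1}$ and we know $e_j=uv$ is the edge colored at step $i$. The set $S'$ of forbidden colors is defined purely in terms of $\Phi_{i-1}$ via rules (1)--(2), so $S=\{1,\ldots,K\}\setminus S'$ is determined, and $|S|\ge\lceil\gamma(\Delta-1)\rceil$ by the counting argument given before Lemma~\ref{lem:uncolored}. The color assigned to $e_j$ at step $i$ is $\Phi_i(e_j)$ if $R_i$ is empty (the edge stayed colored), and if $R_i=(k,\ell)$ it is the color that $e_j=e_{i_1}$ received, which by the alternation argument of the previous paragraph is also reconstructible (it is one of the two cycle colors, namely the one in the position of $e_{i_1}$ in the alternation). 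Then $F_i$ is simply the rank of this color among the elements of $S$, and since $F_i\in\{1,\ldots,\lceil\gamma(\Delta-1)\rceil\}$ and $|S|\ge\lceil\gamma(\Delta-1)\rceil$, this rank is a valid and unambiguous recovery of $F_i$. This completes the induction.

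The main obstacle I anticipate is the bookkeeping in the case $R_i=(k,\ell)$: one must argue carefully that a 2-colored cycle of length $\ge 6$, together with the identity of two of its (consecutive-in-the-alternation) retained edges, determines the colors of all its edges in $\Phi_{i-1}$ --- this uses that in a properly 2-colored even cycle the two colors strictly alternate, so the colors of the two edges $e_{i_2},e_{i_3}$ (kept precisely so that this information survives) fix everything. One should also double-check that the two retained edges are chosen so as to actually be informative (i.e.\ they are not both the same color), which is where the precise choice ``except $e_{i_2}$ and $e_{i_3}$'' with $e_{i_2},e_{i_3}$ adjacent on $C$ matters: adjacent edges on a properly colored cycle have distinct colors, so the two retained edges exhibit both cycle colors.
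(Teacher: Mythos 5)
Your proof is correct and follows essentially the same route as the paper: induction on $i$, using Lemma~\ref{lem:uncolored} to recover $e_j$, reconstructing $\Phi_{i-1}$ from $\Phi_i$ (trivially when $R_i$ is empty, and via the alternation of the two cycle colors read off the two retained consecutive edges $e_{i_2},e_{i_3}$ when $R_i=(k,\ell)$), and finally recovering $F_i$ as the rank of the assigned color in the allowed set $S$. The paper expresses this last step as $F_i = c - a$, which is the same as your rank computation.
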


\begin{proof}
We prove by induction on $i$ that the record $(R_j)_{j\le i}$ and the
partial coloring $\Phi_i$ uniquely determine the input $(F_j)_{j\le
  i}$ that produced such record and coloring. After the first step, the
color of the only colored edge in $\Phi_1$ is equal to $F_1$. Next,
assume that $i \ge 2$. By Lemma~\ref{lem:uncolored} we know $X_i$ and
$X_{i-1}$. In particular, we know the edge $e_j$ that is colored at
step $i$. 

Assume first that $R_i$ is empty. Then $\Phi_{i-1}$ is obtained from
$\Phi_i$ by simply uncoloring $e_j$. By the induction, it follows that
$(F_j)_{j\le i-1}$ is uniquely determined, and all that remains is to
find $F_i$. Let $c\in \{1,\ldots,K\}$ be the color of $e_j=uv$ in
$\Phi_i$, and let $a$ be the number of different colors $\{i \, |\, i
< c\}$ appearing in the coloring $\Phi_{i-1}$ on (1) edges adjacent to
$e_j$ or (2) edges $xy$ such that $ux$ and $yv$ are edges of $G$ and
have the same color. Then $F_i=c-a$.

Now assume that $R_i=(k,\ell)$, with $\ell$ corresponding to some
cycle of length $2k\ge 6$, say $C=e_{i_1},\ldots,e_{i_{2k}},e_{i_1}$ with
$e_{i_1}=e_j$ and $i_2<i_{2k}$. Then since $C$ is 2-colored when $e_j$
is assigned its color, the coloring $\Phi_{i-1}$ is obtained from
$\Phi_i$ by coloring $e_{i_{5}},e_{i_{7}},\ldots,e_{i_{2k-1}}$ with
color $\Phi_i(e_{i_3})$ and $e_{i_{4}},e_{i_{6}},\ldots,e_{i_{2k}}$
with color $\Phi_i(e_{i_2})$. Moreover, $e_j$ received color
$\Phi_i(e_{i_3})$ at step $i$ just before being uncolored. As above,
we conclude using the induction that since $\Phi_{i-1}$ is uniquely
determined, so is $(F_j)_{j\le i-1}$, and we obtain $F_i$ from the color
assigned to $e_j$ at this step as in the previous paragraph.
\end{proof}

Let $\mathcal F_t$ be the set of vectors $F$ such that at step $t$ of
the algorithm, the graph $G$ has not been completely colored (in other
words, $X_t$ is not empty). By definition of $F$, $|\mathcal F_t|\leq
\lceil \gamma (\Delta-1)\rceil^t$ and if the inequality is strict,
then $G$ has an acyclic edge-coloring with
$K=\lceil(2+\gamma)(\Delta-1)\rceil$ colors.

Let $\mathcal R_t$ be the set of records $R$ that can be produced with
inputs from $\mathcal F_t$. Since there are at most $(K+1)^m$ partial
colorings $\Phi_t$ of $G$, the two previous lemmas have the following
direct consequence:

\begin{lemma}\label{lem:cor}
$|\mathcal F_t|\le (K+1)^m |\mathcal R_t|$.
\end{lemma}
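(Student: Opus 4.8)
The plan is to combine the two injectivity lemmas just established with a simple counting of the target set. First, recall that Lemma~\ref{lem:uncolored} shows $X_i$ is a function of $(R_j)_{j\le i}$, and Lemma~\ref{lem:input} shows that the map $(F_j)_{j\le i}\mapsto((R_j)_{j\le i},\Phi_i)$ is injective. Applying this at $i=t$, the map $F\mapsto(R,\Phi_t)$ from $\mathcal F_t$ into the set of pairs (record, partial coloring) is injective, hence $|\mathcal F_t|$ is at most the number of such pairs that actually arise, which is at most $|\mathcal R_t|$ times the number of partial colorings $\Phi_t$ of $G$.

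Next I would bound the number of partial colorings of $G$. A partial edge-coloring assigns to each of the $m$ edges either one of the $K$ colors or the value ``uncolored'', giving at most $(K+1)^m$ possibilities; this count does not depend on $t$. Multiplying, we get $|\mathcal F_t|\le (K+1)^m\,|\mathcal R_t|$, which is exactly the claimed inequality.

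There is essentially no obstacle here: the content has already been done in Lemmas~\ref{lem:uncolored} and~\ref{lem:input}, and the present statement is just the product of the two bounds. The only point requiring a line of care is to make explicit that $\mathcal R_t$ is, by its very definition, the image of $\mathcal F_t$ under $F\mapsto R$, so that every record appearing among the pairs $(R,\Phi_t)$ with $F\in\mathcal F_t$ lies in $\mathcal R_t$; then a pair $(R,\Phi_t)$ ranges over a set of size at most $|\mathcal R_t|\cdot(K+1)^m$, and injectivity of the map from $\mathcal F_t$ transfers this as an upper bound on $|\mathcal F_t|$.

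The real work — showing $|\mathcal R_t|=o(\lceil\gamma(\Delta-1)\rceil^t)$ via the Dyck-word / rooted-plane-tree count — comes afterwards and is not needed for this lemma.
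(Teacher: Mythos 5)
Your proof is correct and follows the same reasoning the paper gives (the paper states Lemma~\ref{lem:cor} as a ``direct consequence'' of the preceding lemmas): injectivity of $F\mapsto(R,\Phi_t)$ from Lemma~\ref{lem:input}, the bound $(K+1)^m$ on partial colorings, and the observation that the records arising all lie in $\mathcal R_t$.
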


We will now compute $|\mathcal R_t|$ and show that for $t$ large
enough, $|\mathcal F_t|$ is smaller than the set of all possible
vectors, meaning that there is a vector $F$ for which the algorithm
terminates.

\medskip

Recall that a 2-colored cycle that is partially uncolored at some step
is recorded by a pair $(k,\ell)$, where the cycle has length $2k\ge 6$, and
index $\ell$ among the at most $(\Delta-1)^{2k-2}$
cycles of length $2k$ containing the current edge. Hence $\ell \le
(\Delta-1)^{2k-2}$. 

Consider a word $w=w_1\ldots w_{2k-2}$ of length $2k-2$ on the
alphabet $\mathcal A=\{1,\ldots,\Delta-1\}$, and define
$\theta_k(w)=1+\sum_{i=1}^{2k-2} (w_i-1) \,(\Delta-1)^{i-1}$. Then the
function $\theta_k$ has range in $1,\ldots,(\Delta-1)^{2k-2}$ and is
bijective.

Let $R\in \mathcal R_t$. Define $R^*=(R_i^*)_{i\le t}$ as the
following sequence of $t$ words on the alphabet $\mathcal A^*=\mathcal
A\cup \{0\}$: for any $1 \le i\le t$, if $R_i$ is empty, then
$R^*_i=0$. Otherwise $R_i=(k,\ell)$ for some $k,\ell$ and we set
$R^*_i$ to be the concatenation of $0$ and ${\theta^{-1}}_k(\ell)$. We
now consider the sequence of words $R^*$ as a word $R^\bullet$
(concatenating all the entries in order), and define $R^\circ$ as the
word on $\{0,1\}$ obtained from $R^\bullet$ by the morphism
$\kappa(x)=0$ if $x= 0$ and $\kappa(x)=1$ otherwise. For instance, if
$\Delta=4$ and \smallskip

\begin{tabular}{l c l} \smallskip
$R$ & $=$ & $(\emptyset,\emptyset,\emptyset,\emptyset,
\emptyset,(3,4),\emptyset, \emptyset,\emptyset,(3,15))$, then we have\\ 
    $R^*$ & $=$ & $(0,0,0,0,0,01211,0,0,0,03221)$,\\
    $R^\bullet$ & $=$ & $000000121100003221$, and\\
$R^\circ$ & $=$ & $000000111100001111.$\\
\end{tabular}
\smallskip

Observe that
the function $R^* \mapsto R^\bullet$ is an injection since every entry
of $R^*$ starts with a 0 and there are no other 0's in words of
$R^*$. It follows that the function $R \mapsto R^\bullet$ is also an
injection. We now make a couple of observations on words $R^\circ$,
for $R\in \mathcal R_t$.

\medskip

A \emph{partial Dyck word} is a word $w$ on the alphabet $\{0,1\}$
such that any prefix of $w$ contains at least as many $0$'s as
$1$'s. A \emph{Dyck word} of length $2t$ is a partial Dyck word with
$t$ 0's and $t$ 1's. A \emph{descent} in a (partial) Dyck word is a
maximal sequence of consecutive $1$'s.

\begin{lemma}\label{lem:partialdyck}
For any $R\in\mathcal R_t$, the word $R^\circ$ is a partial Dyck word
with $t$ 0's and $t-r$ 1's, where $r$ is the number of colored edges
after step $t$. Moreover, all descents in $R^\circ$ are even, and if
every cycle of $G$ has length at least $2\ell+1$, for some $\ell\ge
1$, then all descents in $R^\circ$ have length at least
$\max(4,2\ell)$.
\end{lemma}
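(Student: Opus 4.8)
The plan is to analyze the structure of $R^\circ$ directly from its definition. First I would establish that $R^\circ$ is a partial Dyck word. The word $R^\bullet$ is the concatenation, over all steps $i \le t$, of the entries $R^*_i$; each such entry is either a single $0$ (when no cycle is destroyed at step $i$) or a block of the form $0\,{\theta^{-1}}_k(\ell)$ of length $2k-1$ (when a $2k$-cycle is partially uncolored). Applying the morphism $\kappa$, the contribution of step $i$ to $R^\circ$ is either a single $0$, or $0\,1^{2k-2}$ — one $0$ followed by $2k-2$ ones. So $R^\circ$ is built by concatenating blocks, each of which has exactly one $0$ at the front followed by an even number (possibly zero) of $1$'s.

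To see that every prefix has at least as many $0$'s as $1$'s, I would set up a potential/counting argument keyed to the number of colored edges. Let me track, after each step $i$, the quantity $c_i := $ (number of colored edges after step $i$) and compare it with the number of $0$'s and $1$'s output so far. At step $i$ we always color one new edge $e_j$; if $R_i$ is empty this is a net gain of one colored edge, matched by one $0$ output. If $R_i = (k,\ell)$, a $2k$-cycle is destroyed, and as described in the algorithm all its edges are uncolored except $e_{i_2}$ and $e_{i_3}$; since $e_j = e_{i_1}$ was just colored, the net change in the number of colored edges is $1 - (2k-2) = -(2k-3)$... wait, I must recount: of the $2k$ edges on the cycle, $e_{i_1}=e_j$ was newly colored at this step, and all of $e_{i_1},\dots,e_{i_{2k}}$ are uncolored except two, so $2k-2$ edges get uncolored, of which one is $e_j$ itself; thus $2k-3$ previously-colored edges become uncolored, and together with $e_j$ the net change is $1 - (2k-2) = -(2k-3)$. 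Meanwhile the output block is $0\,1^{2k-2}$, a net change of $0$-count minus $1$-count equal to $1 - (2k-2) = -(2k-3)$. So inductively the number of $0$'s minus the number of $1$'s in the prefix of $R^\circ$ written through the end of step $i$ equals exactly $c_i$, which is $\ge 0$; and within the block produced at a single step the $0$ comes first, so no intermediate prefix can dip below $0$ either. This gives the partial Dyck property, and taking $i = t$ gives $t$ zeros (one per step) and $t - r$ ones, where $r = c_t$ is the number of colored edges after step $t$.

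The descent claim is the part requiring the most care. A descent in $R^\circ$ is a maximal run of $1$'s. The subtlety is that a run of $1$'s in $R^\circ$ could in principle be formed by concatenating the $1^{2k-2}$ tail of one step's block directly with the start of the next step's block — but the next step's block always begins with a $0$, so a descent can never span two blocks. Hence every descent lies entirely within the block $0\,1^{2k-2}$ produced by a single cycle-destroying step, and is exactly the run $1^{2k-2}$; since $2k \ge 6$ we get descent length $2k - 2 \ge 4$, and it is even. For the girth refinement: if every cycle of $G$ has length at least $2\ell+1$, then in particular there are no even cycles shorter than $2\ell+2$, so any destroyed $2k$-cycle satisfies $2k \ge 2\ell+2$, hence $2k-2 \ge 2\ell$; combined with $2k-2\ge 4$ this gives descent length at least $\max(4, 2\ell)$. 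The main obstacle I anticipate is bookkeeping the net colored-edge count correctly across a cycle-destruction step (being careful about which of the $2k$ edges were already colored, which is newly colored, and which two survive), and making sure the "within a block the $0$ precedes the $1$'s" observation is stated cleanly enough to rule out a prefix violating the Dyck condition in the middle of a block.
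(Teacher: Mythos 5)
Your proof is correct and follows essentially the same approach as the paper: identify each step's contribution to $R^\circ$ as a block $01^{2k-2}$, observe that $0$'s correspond to coloring and $1$'s to uncoloring so that the prefix-excess of $0$'s over $1$'s equals the (nonnegative) number of currently colored edges, and note that descents cannot span blocks since every block begins with a $0$. Your version just makes the bookkeeping of the colored-edge invariant and the within-block prefix check more explicit than the paper does.
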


\begin{proof}
When we read $R^\circ$ from left to right, every 0 in $R^\circ$
corresponds to an edge that is assigned a color according to the
vector $F$, while every 1 corresponds to the process of uncoloring
some colored edge. Indeed, every word $\kappa(R^*_i)$ is of the form
$01^{2k-2}$, for some $k\ge 1$; if $k=1$ it means that at step $i$ of
the algorithm, the edge that is colored is not involved in any
conflict, while if $k \ge 2$ the colored edge is contained in some
2-colored cycle of length $2k$. In this case we uncolor $2k-2$ edges,
which is precisely the number of 1's in $\kappa(R^*_i)$. Since we
cannot uncolor more edges than the number of colored edges, the result
of the first part follows.  The second part follows from the fact that
if all cycles have length at least $2\ell +1$, all even cycles have
length in at least $2\ell+2$.  A 2-colored cycle in the algorithm has
length at least 6, so each descent is even and has length at least
$\max(4,2\ell)$.
\end{proof}

Let $R\in \mathcal R_t$. If the word $R^\circ$ has $t-r$ 1's, then the
preimage of $R^\circ$ under the function $R \mapsto R^\circ$ has
cardinality at most $(\Delta-1)^{t-r}$. This follows from the fact that $R
\mapsto R^*$ and $R^* \mapsto R^\bullet$ are injections, and each 1 in
$R^\circ$ corresponds to an element of $\{1,\ldots,\Delta-1\}$ in
$R^\bullet$.

Let $\mathcal R^\circ_t= \{R^\circ\,|\,R\in\mathcal R_t\}$. The
preceding remark, together with Lemma~\ref{lem:partialdyck} (more
precisely, the fact that the number of 1's is at most the number of
0's in $R^\circ$) show that $|\mathcal R_t|\le (\Delta-1)^t |\mathcal
R^\circ_t|$. Hence, Lemma~\ref{lem:cor} implies the following.

\begin{lemma}\label{lem:cor2}
$|\mathcal F_t|\le (K+1)^m (\Delta-1)^t |\mathcal R^\circ_t|$.
\end{lemma}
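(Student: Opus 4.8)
The plan is to combine Lemma~\ref{lem:cor} with the intermediate estimate $|\mathcal R_t|\le (\Delta-1)^t|\mathcal R^\circ_t|$ announced just before the statement, so the whole argument is essentially bookkeeping on top of the two injectivity facts already established. First I would make the bound on the fibers of the map $R\mapsto R^\circ$ precise. Fix a word $w\in\mathcal R^\circ_t$ and count the $R\in\mathcal R_t$ with $R^\circ=w$. Since $R\mapsto R^*$ and $R^*\mapsto R^\bullet$ are injections (shown in the text), it suffices to count the words $R^\bullet$ with $\kappa(R^\bullet)=w$. The morphism $\kappa$ sends $0$ to $0$ and every nonzero letter of $\mathcal A^*$ to $1$, so if $w$ has $t-r$ ones, each such position can carry any of the $\Delta-1$ letters of $\mathcal A$ while the positions carrying a $0$ are forced; here $r$ is the number of colored edges after step $t$. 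Hence the fiber over $w$ has size at most $(\Delta-1)^{t-r}$.

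Next I would invoke Lemma~\ref{lem:partialdyck}: the word $R^\circ$ is a partial Dyck word with $t$ zeros and $t-r$ ones, so in particular $t-r\le t$. Since $\Delta-1\ge 1$ this gives $(\Delta-1)^{t-r}\le (\Delta-1)^t$ for every $w\in\mathcal R^\circ_t$. Summing the fiber bound over all $w\in\mathcal R^\circ_t$ yields $|\mathcal R_t|\le (\Delta-1)^t|\mathcal R^\circ_t|$. Finally, plugging this into Lemma~\ref{lem:cor} gives $|\mathcal F_t|\le (K+1)^m|\mathcal R_t|\le (K+1)^m(\Delta-1)^t|\mathcal R^\circ_t|$, which is exactly the assertion of Lemma~\ref{lem:cor2}.

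I do not expect a genuine obstacle here. The only points requiring a moment's care are that the inequality must go $t-r\le t$ and not the other way — this is precisely the partial-Dyck property, i.e.\ the fact that the algorithm never uncolors more edges than it has colored, which is why Lemma~\ref{lem:partialdyck} is cited rather than just its ``even descents'' part — and the degenerate case $\Delta=1$, for which the statement (like the whole problem) is vacuous. The substantive work, namely bounding $|\mathcal R^\circ_t|$ itself through the enumeration of partial Dyck words with prescribed (even, long) descents, is postponed to the later part of the analysis and is not needed for this lemma.
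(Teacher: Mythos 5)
Your proposal matches the paper's argument exactly: bound the fibers of $R\mapsto R^\circ$ by $(\Delta-1)^{t-r}$ using the injectivity of $R\mapsto R^*$ and $R^*\mapsto R^\bullet$, use the partial-Dyck property from Lemma~\ref{lem:partialdyck} to get $t-r\le t$, and compose with Lemma~\ref{lem:cor}. This is precisely the chain of reasoning the paper gives in the paragraph preceding the lemma.
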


Our objective is now to count partial Dyck words having the properties
described in Lemma~\ref{lem:partialdyck}. To make the computation
easier, we will in fact count \emph{Dyck words} with these
properties. The next lemma shows that counting these two objects is
almost equivalent, provided that $r$ (the difference between the
number of 0's and 1's in the partial Dyck word) is not too large.

\begin{lemma}\label{lem:countingpartial}
Let $t$ and $r\leq t$ be integers, and let $E\ne\{1\}$ be a non-empty
set of non-negative integers. Let $C_{t,r,E}$ (resp. $C_{t,E}$) be the
number of partial Dyck words with $t$ 0's, $t-r$ 1's (resp. Dyck words
with length $2t$), and all descents having length in $E$. Then
$C_{t,r,E} \le C_{t+r(s-1),E}$, where $s=\min (E\setminus \{1\})$.
\end{lemma}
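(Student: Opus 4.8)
\textbf{Proof proposal for Lemma~\ref{lem:countingpartial}.}

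The plan is to exhibit an injection from the set of partial Dyck words counted by $C_{t,r,E}$ into the set of (genuine) Dyck words counted by $C_{t+r(s-1),E}$, where $s=\min(E\setminus\{1\})$. Fix a partial Dyck word $w$ with $t$ zeros, $t-r$ ones, and all descents of length in $E$. The word $w$ is ``unbalanced'' by exactly $r$ extra zeros; to turn it into a balanced Dyck word we must append $r$ more ones, but we cannot simply append them at the end because we must keep all descents in $E$. The key idea is that the minimal allowed nonzero descent length is $s$, so I will pad by appending $r$ blocks, each of the form $0^{s-1}1^{s}$, at the very end of $w$. Each such block adds $s-1$ zeros and $s$ ones, so after appending all $r$ blocks we have $t+r(s-1)$ zeros and $(t-r)+rs = t+r(s-1)$ ones: the result is balanced and has length $2(t+r(s-1))$.

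Next I would check that this construction does produce a valid Dyck word with all descents in $E$. Since $w$ is a partial Dyck word, every prefix of $w$ has at least as many zeros as ones; within the appended part, each block $0^{s-1}1^{s}$ is preceded (in the running count) by a surplus that only decreases by $1$ per block but starts at $r$ and each block is itself internally ``zeros first'', so the prefix condition is maintained throughout — more carefully, after the first $j$ appended blocks the running surplus of zeros over ones is $r-j\ge 0$, and inside block $j+1$ the surplus only goes up (reading the $0^{s-1}$ part) before coming down to $r-(j+1)$, never below $0$. For the descent condition: the descents entirely inside $w$ are unchanged and hence in $E$; the final descent of $w$ (if $w$ ends in ones) is immediately followed by a $0$ from the first appended block, so it is not merged or extended; and each appended block contributes exactly one new descent of length $s\in E$. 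Hence all descents of the new word lie in $E$. Finally, injectivity is clear: given the output Dyck word, one recovers $w$ by deleting the last $r$ blocks of the form $0^{s-1}1^{s}$ — equivalently, $w$ is the prefix of length $2t-r$ of the output, since the padding is a fixed suffix of known length $r(2s-1)$.

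The one point that needs a little care — and the only genuine obstacle — is the hypothesis $E\ne\{1\}$, which guarantees $E\setminus\{1\}\ne\emptyset$ so that $s$ is well-defined; without it there is no legal descent length to pad with and the statement would fail. (Note also that $s\ge 2$ whenever $1\notin E$, and $s$ could equal some value $\ge 2$ in general; the argument above uses only $s\in E$ and $s\ge 1$, and in fact $s\ge 2$ is what makes the block $0^{s-1}1^s$ nonempty on the zero side, though even $s=1$ would be harmless here since then $r(s-1)=0$ and there is nothing to prove — but $E=\{1\}$ is excluded precisely to avoid the degenerate case where $1$ is the only option yet a length-$1$ descent is forbidden for the actual application). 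Assembling the injection and these verifications gives $C_{t,r,E}\le C_{t+r(s-1),E}$, as claimed. $\Box$
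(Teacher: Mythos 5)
Your proof is correct and takes essentially the same approach as the paper: the paper defines the map $\psi\colon\mathcal{D}_{t,r,E}\to\mathcal{D}_{t+r(s-1),E}$ appending $(0^{s-1}1^s)^r$ to the end of the word and asserts, without further detail, that it is well-defined and injective; you have simply spelled out the balance, prefix, descent, and injectivity checks. One small remark on your closing parenthetical: $s=1$ can never occur (by definition $s=\min(E\setminus\{1\})$), and in fact if it could, appending $1^r$ would lengthen the final descent of $w$, so ``there is nothing to prove'' would not quite be accurate — but the case is vacuous and the argument is unaffected.
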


\begin{proof}
Let ${\mathcal D}_{t,r,E}$ (resp. ${\mathcal D}_{t,E}$) be the set of
partial Dyck words with $t$ 0's, $t-r$ 1's (resp. Dyck words with
length $2t$), and all descents having length in $E$. Let $\psi:
{\mathcal D}_{t,r,E} \to {\mathcal D}_{t+r(s-1),E}$ be the application that
    appends to the end of a word the word $(0^{s-1}1^s)^r$.  Observe
    that the application $\psi$ is well-defined and injective. The
    result follows.
\end{proof}

There are various ways to compute asymptotics for $C_{t,E}$, in
particular by finding bijections with well-known structures. We use
here a bijection with rooted plane trees\footnote{A \emph{rooted plane
    tree} is a tree embedded in the plane in which a given vertex (the
  \emph{root}) is specified. The embedding gives an order on the
  subtrees attached to each node.}.

\begin{lemma}
The number $C_{t,E}$ of Dyck words with length $2t$ and all descents
in $E$ is equal to the number of rooted plane trees on $t + 1$
vertices such that the degree (number of children) of each vertex is
in $E\cup\{0\}$.
\end{lemma}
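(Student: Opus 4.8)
The plan is to construct an explicit bijection between Dyck words of length $2t$ with all descents in $E$ and rooted plane trees on $t+1$ vertices in which every vertex has a number of children lying in $E \cup \{0\}$. I would use the standard depth-first-search (preorder) encoding of a rooted plane tree, reading a $0$ each time the walk descends along an edge to a child and a $1$ each time it ascends along an edge back to a parent; equivalently, this is the classical contour-walk correspondence between Dyck paths and plane trees, in which a tree with $n$ edges maps to a Dyck word of length $2n$. Since a tree on $t+1$ vertices has exactly $t$ edges, the resulting word has $t$ zeros and $t$ ones and is a genuine Dyck word (every prefix has at least as many $0$'s as $1$'s, because the walk can never ascend more than it has descended).

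The key step is to check that, under this encoding, descents of the Dyck word correspond precisely to the children-counts of the vertices, so that the constraint ``all descents in $E$'' matches the constraint ``every non-leaf vertex has a number of children in $E$'' (leaves contribute $0$, which is why the tree condition allows $E \cup \{0\}$). Here is why: a maximal block of consecutive $1$'s in the word is produced exactly when the DFS walk, having just finished exploring the last child of some vertex $v$, climbs back up through a chain of vertices each of which has no further unexplored child. I would argue more carefully that the correct accounting is to associate to each vertex $v$ the $0$'s emitted when descending to each of its children: if $v$ has $d$ children then these $d$ descents are immediately followed — after the corresponding subtrees are fully traversed — by ascents, and a little bookkeeping shows that the lengths of the maximal $1$-blocks of the whole word are exactly the multiset of out-degrees of the vertices that have at least one child, read in the appropriate order. (The cleanest way to see this is to note that in the contour walk, each edge contributes one $0$ on the way down and one $1$ on the way up, and the $1$'s coming up from the children of a fixed vertex $v$ are consecutive in the word precisely because between finishing one child of $v$ and the next there are no intervening ascents above $v$; summing, the maximal $1$-blocks partition the $t$ ones by which vertex they ``return to''.) Thus the map restricts to a bijection between the two constrained families.

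Finally I would record that the map is a bijection in both directions: given a Dyck word one recovers the tree by the usual bracket-matching (each $0$ opens a bracket / creates a new child, each $1$ closes the current bracket / returns to the parent), and the descent condition translates back into the degree condition by the same analysis. Since both directions are clearly well-defined and mutually inverse, the two sets have the same cardinality, giving $C_{t,E}$ equal to the stated count of rooted plane trees. The main obstacle — really the only subtle point — is making the ``descents $\leftrightarrow$ children-counts'' correspondence airtight, i.e. convincing oneself that a maximal run of $1$'s in the contour word has length exactly equal to the out-degree of the vertex it returns to, and that these runs are in bijection with the non-leaf vertices; everything else is the textbook Dyck-path/plane-tree dictionary. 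I expect the author's proof to set up this bijection and verify the descent condition, possibly by an induction on $t$ rather than the explicit contour-walk description.
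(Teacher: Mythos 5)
Your proposed bijection uses the contour-walk (depth-first traversal) encoding, where one writes a $0$ for each edge traversed downward and a $1$ for each edge traversed upward, and you claim that the maximal runs of $1$'s have lengths equal to the out-degrees of the internal vertices. That claim is false, and it is exactly the ``only subtle point'' you flag. Take the path on three vertices rooted at one end: root $r$ with a single child $a$, which has a single child $b$. The contour walk gives $0011$, whose unique descent has length $2$, while the multiset of out-degrees of the internal vertices is $\{1,1\}$. A descent in the contour word counts the length of the maximal chain of vertices that the walk finishes in a row (i.e.\ consecutive last-visits), not the number of children of any single vertex; these agree only in special cases. So your map does not restrict to a bijection between the two constrained families, and the core step of the proof collapses.

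The paper avoids this by using a different encoding: list the vertices in DFS (preorder) order and, for every vertex except the last one, emit $0^{i}1$ where $i$ is its number of children. This is essentially the \L{}ukasiewicz/preorder encoding, and under it the maximal runs of \emph{zeros} are precisely the out-degrees of the non-leaf vertices. One then converts ``runs of $0$'s in $E$'' into ``descents in $E$'' by reversing the word and swapping $0$'s with $1$'s, which is an involution on Dyck words. If you want to keep a contour-walk flavor you would have to change the statistic you read off; as written, the contour encoding does not match descent lengths to child counts.
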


\begin{proof}
There are bijections between the following three objects for any
integer $t$, proving the lemma:

\begin{enumerate}
\item rooted plane trees on $t+1$ vertices such that the degree of
  each vertex is in $E \cup \{0\}$;

\item Dyck words
of length $2t$ in which the length of any maximal sequence of consecutive
$0$'s is in $E$;

\item Dyck words
of length $2t$ such that the length of each descent is in $E$.
\end{enumerate}

The bijection between items 1 and 2 is as follows: in a DFS walk of
the tree, encode each vertex (except the very last one) having $i$
children by $0^i1$. The word obtained is a Dyck word in which every
maximal sequence of consecutive $0$'s is in $E$, and this application
is clearly a bijection. The bijection between items 2 and 3 proceeds
by taking the mirror of the word and interchanging 0's and 1's.
\end{proof}

We now use generating functions and the method described by Drmota
in \cite{D} (see also the book of Flajolet and Sedgewick~\cite{FS}) to
estimate $C_{t,E}$. Let $X_E(z)$ be the ordinary generating function
(OGF) associated to the number of rooted plane trees on $t+1$ vertices
such that the degree (number of children) of each vertex is in $E \cup
\{0\}$.  By the previous lemma, $X_E(z)=z\sum_{t\in \mathbb{N}}
C_{t,E}\, z^t$.  But a rooted plane tree as above is either a unique
vertex (the root), or the root together with a sequence of $i$ rooted
plane trees (such that the degree of each vertex is in $E \cup
\{0\}$), for some $i\in E$. It follows that $X_E$ satisfies the
equation $X_E(z)=z(1+\sum_{i\in E}X_E(z)^i)=z\,\phi_E(X_E(z))$, with
$\phi_E(x)=1+\sum_{i\in E}x^i$.

\smallskip

The next lemma is a direct corollary of~\cite[Theorem 5]{D} (see
also~\cite[Theorem VI.6]{FS} where the result is stated in the
specific case of aperiodic schemes). We just need to observe that for
any non-empty set $E\ne\{1\}$ of nonnegative integers, all the
coefficients of $\phi_E$ are nonnegative and $\phi_E(x)$ is not linear
in $x$.

\begin{lemma}\label{lem:dyck}
Let $E\ne\{1\}$ be a non-empty set of nonnegative integers such that the
equation $\phi_E(x)-x \phi_E'(x)=0$ has a solution $x=\tau$ with $0 <
\tau <R$, where $R$ is the radius of convergence of $\phi_E$. Then
$\tau$ is the unique solution of the equation in the open interval
$(0,R)$. Moreover there is a constant $c_E$ such that $C_{t,E} \le c_E
\, \gamma^t \,t^{-3/2}$, where $\gamma=\phi_E'(\tau)=\phi_E(\tau)/\tau$.
\end{lemma}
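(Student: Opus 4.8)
The plan is to invoke the singularity analysis of smooth implicit-function (tree-like) schemes, exactly as packaged in Drmota's Theorem~5 (equivalently Flajolet--Sedgewick Theorem VI.6). First I would check the hypotheses of that black box for our specific $\phi_E$: since $E\ne\{1\}$ is a non-empty set of non-negative integers, $\phi_E(x)=1+\sum_{i\in E}x^i$ has non-negative coefficients, $\phi_E(0)=1\ne 0$, and $\phi_E$ is not an affine function of $x$ (this is where $E\ne\{1\}$ is used: if $E=\{1\}$ then $\phi_E(x)=1+x$ is linear; any other choice gives a term $x^i$ with $i=0$ or $i\ge 2$, making $\phi_E$ genuinely nonlinear). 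Hence the functional equation $X_E(z)=z\,\phi_E(X_E(z))$ defines a smooth implicit-function scheme of the type covered by the theorem.

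Next I would address the characteristic equation. The theorem asserts that, under the stated assumptions, if the equation $\phi_E(x)-x\phi_E'(x)=0$ has \emph{a} root $\tau$ with $0<\tau<R$ (here $R$ is the radius of convergence of $\phi_E$, so $R=\infty$ when $E$ is finite and $R=1$ when $E$ is infinite), then this root is unique in $(0,R)$: indeed $x\mapsto x\phi_E'(x)/\phi_E(x)$ is strictly increasing on $(0,R)$ because $\phi_E$ has non-negative coefficients and is nonlinear (the logarithmic derivative argument: $\tfrac{d}{dx}\log\phi_E(x)=\phi_E'(x)/\phi_E(x)$ versus $\tfrac{d}{dx}\log x=1/x$, and the difference changes sign at most once). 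I would state this monotonicity explicitly and conclude uniqueness, then set $\gamma=\phi_E'(\tau)$ and note the identity $\gamma=\phi_E(\tau)/\tau$, which is just a rearrangement of $\phi_E(\tau)-\tau\phi_E'(\tau)=0$.

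Finally I would extract the asymptotics. The scheme being aperiodic or periodic only affects whether $C_{t,E}$ is asymptotic to a constant times $\gamma^t t^{-3/2}$ or whether this holds along a sublattice of $t$; in either case one gets the \emph{upper bound} $C_{t,E}\le c_E\,\gamma^t\,t^{-3/2}$ for a suitable constant $c_E$, which is all we claim. Concretely: $X_E(z)$ has a unique dominant singularity at $\rho=\tau/\phi_E(\tau)=1/\gamma$, where it has a square-root branch point ($X_E(z)=\tau - c\sqrt{1-z/\rho}+\cdots$), so transfer theorems give $[z^{t+1}]X_E(z)=O(\gamma^{t+1}t^{-3/2})$, and since $[z^{t+1}]X_E(z)=C_{t,E}$ by the previous lemma, the bound follows after absorbing constants. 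The only mild subtlety, which I would flag, is the periodic case (when all elements of $E\cup\{0\}$ lie in a common residue class, e.g.\ $E$ consisting only of even numbers, as happens for triangle-free or higher-girth inputs): there $C_{t,E}$ vanishes for $t$ outside the relevant lattice, but the inequality $C_{t,E}\le c_E\,\gamma^t t^{-3/2}$ still holds trivially for those $t$ and holds by the lattice version of the transfer theorem for the others, so the stated one-sided bound is unaffected. I expect the main obstacle to be purely expository: making precise that we only need an upper bound and that Drmota's theorem (stated for aperiodic schemes) yields it even in the periodic case, rather than any real computation.
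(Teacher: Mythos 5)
Your proposal follows exactly the same route as the paper: the paper's proof of this lemma is simply a citation to Drmota's Theorem~5 (equivalently Flajolet--Sedgewick Theorem~VI.6) together with the observation that $\phi_E$ has nonnegative coefficients and is nonlinear when $E\ne\{1\}$, which is precisely the hypothesis check you carry out. Your added discussion of uniqueness via monotonicity of $x\phi_E'(x)/\phi_E(x)$ and of the periodic case (only an upper bound is needed, so the lattice version suffices) fills in details the paper leaves implicit but does not deviate from its approach.
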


We can now derive bounds on the acyclic chromatic index of
graphs. Recall that the girth of a graph $G$ is the length of a
shortest cycle of $G$ (if $G$ is acyclic, its girth is $+\infty$). 

\begin{theorem}\label{th:main}
Let $\ell\ge 1$ be a fixed integer, and let $k=\max(2,\ell)$. Then the
polynomial $P(x)=(2k-3)x^{2k+2}+(1-2k)x^{2k}+x^4-2x^2+1$ has a unique
root $\tau$ in the open interval $(0,1)$. Moreover, every graph with
maximum degree $\Delta$ and girth at least $2\ell+1$ has an acyclic
edge-coloring with at most $\lceil (2+\gamma) (\Delta-1)\rceil$ colors,
where $\gamma=(\tau^{2k}-\tau^{2}+1)/(\tau-\tau^3)$.
\end{theorem}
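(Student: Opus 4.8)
The plan is to apply Lemma~\ref{lem:dyck} with the set $E$ of allowed descent lengths dictated by our algorithm and the girth hypothesis, and then chain together Lemmas~\ref{lem:cor2}, \ref{lem:countingpartial} and \ref{lem:dyck} to bound $|\mathcal F_t|$. By Lemma~\ref{lem:partialdyck}, since $G$ has girth at least $2\ell+1$, every descent in $R^\circ$ is even and of length at least $\max(4,2\ell)=2k$; so the relevant set is $E=\{2k,2k+2,2k+4,\ldots\}$, the set of even integers at least $2k$. I would first compute $\phi_E(x)=1+\sum_{i\in E}x^i = 1+\frac{x^{2k}}{1-x^2}$ (a convergent geometric-type series with radius of convergence $R=1$), and then show that the equation $\phi_E(x)-x\phi_E'(x)=0$ has a root $\tau\in(0,1)$. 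A direct computation of $\phi_E(x)-x\phi_E'(x)$, after clearing the denominator $(1-x^2)^2$, should produce exactly (a constant multiple of) the polynomial $P(x)=(2k-3)x^{2k+2}+(1-2k)x^{2k}+x^4-2x^2+1$; this identifies the combinatorial constant $\tau$ of Lemma~\ref{lem:dyck} with the root of $P$ in $(0,1)$, and uniqueness in $(0,1)$ then comes for free from the ``unique solution in $(0,R)$'' clause of Lemma~\ref{lem:dyck} (after separately checking that $P$, hence $\phi_E-x\phi_E'$, does have \emph{some} root there, e.g.\ by evaluating $P$ at $0$ and at $1$ and using continuity / the intermediate value theorem, or by noting $\phi_E(0)-0=1>0$ while the quantity $\to -\infty$ as $x\to 1^-$).

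Next I would unwind the growth rate. Lemma~\ref{lem:dyck} gives $C_{t,E}\le c_E\,\gamma^t t^{-3/2}$ with $\gamma=\phi_E(\tau)/\tau$. A short simplification of $\phi_E(\tau)/\tau = \frac{1}{\tau}\bigl(1+\frac{\tau^{2k}}{1-\tau^2}\bigr)=\frac{(1-\tau^2)+\tau^{2k}}{\tau(1-\tau^2)}=\frac{\tau^{2k}-\tau^2+1}{\tau-\tau^3}$ shows this is exactly the value of $\gamma$ named in the statement, so $\gamma$ is the growth rate of Dyck words with all descents in $E$. Then, by Lemma~\ref{lem:partialdyck}, $\mathcal R^\circ_t$ consists of partial Dyck words with $t$ zeros, $t-r$ ones and all descents in $E$, i.e.\ is counted by $C_{t,r,E}$ summed over $r$; Lemma~\ref{lem:countingpartial} (with $s=\min(E\setminus\{1\})=2k$) gives $C_{t,r,E}\le C_{t+r(2k-1),E}$, and since $r\le m$ is bounded independently of $t$, we get $|\mathcal R^\circ_t| \le (m+1)\max_{r\le m} C_{t+r(2k-1),E} \le c'\,\gamma^{t+m(2k-1)}(t)^{-3/2}$ for a constant $c'$ independent of $t$. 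Feeding this into Lemma~\ref{lem:cor2},
\[
|\mathcal F_t|\ \le\ (K+1)^m(\Delta-1)^t\,|\mathcal R^\circ_t|\ \le\ C(\Delta,m)\,\bigl((\Delta-1)\gamma\bigr)^t\,t^{-3/2},
\]
where $C(\Delta,m)$ collects all the $t$-independent factors.

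Finally I would compare this with the total number of input vectors, which is $\lceil\gamma(\Delta-1)\rceil^t \ge (\gamma(\Delta-1))^t$ by the choice made in Theorem~\ref{th:main} (note the $\gamma$ in the number of input symbols is, by design, the same $\gamma$ as the Dyck-word growth rate). Hence
\[
\frac{|\mathcal F_t|}{\lceil\gamma(\Delta-1)\rceil^t}\ \le\ C(\Delta,m)\,\frac{\bigl((\Delta-1)\gamma\bigr)^t}{\bigl(\gamma(\Delta-1)\bigr)^t}\,t^{-3/2}\ =\ C(\Delta,m)\,t^{-3/2}\ \xrightarrow[t\to\infty]{}\ 0.
\]
So for $t$ large enough the inequality $|\mathcal F_t| < \lceil\gamma(\Delta-1)\rceil^t$ is strict, which (as recorded just after Lemma~\ref{lem:input}) means there is an input vector $F$ for which the algorithm colors all of $G$, producing an acyclic edge-coloring with $K=\lceil(2+\gamma)(\Delta-1)\rceil$ colors. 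The main obstacle I anticipate is purely computational: verifying the identity that $\phi_E(x)-x\phi_E'(x)$, cleared of denominators, is proportional to $P(x)$, and double-checking the edge cases $k=2$ (i.e.\ $\ell=1$, girth $\ge 3$, arbitrary graphs) where the $\max(4,2\ell)$ truncates to $4$ — one must make sure $E=\{4,6,8,\ldots\}$ really is the right set there and that $P$ with $k=2$ reduces correctly (the $x^{2k+2},x^{2k}$ terms becoming $x^6,x^4$, so $P(x)=x^6-3x^4+x^4-2x^2+1 = x^6-2x^4-2x^2+1$, up to collecting terms — worth a careful recheck). Everything else is bookkeeping already licensed by the earlier lemmas.
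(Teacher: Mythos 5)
Your proposal is correct and follows essentially the same route as the paper: set $E=2\mathbb{N}+2k$, compute $\phi_E(x)=1+\tfrac{x^{2k}}{1-x^2}$, clear denominators to identify the characteristic equation with $P(x)=0$, invoke Lemma~\ref{lem:dyck} for uniqueness of $\tau$ and the growth rate $\gamma=\phi_E(\tau)/\tau$, and then chain Lemmas~\ref{lem:cor2}, \ref{lem:partialdyck}, and \ref{lem:countingpartial} to show $|\mathcal F_t|/\lceil\gamma(\Delta-1)\rceil^t\to 0$. The $k=2$ sanity check you flagged does work out ($P(x)=x^6-2x^4-2x^2+1$, matching the paper's Table~\ref{tab1}), and the only cosmetic discrepancy is that the paper bounds $r\le m-1$ rather than $r\le m$, which changes nothing.
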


\begin{proof}
Let $E=2 \mathbb{N}+2k$. Then $\phi_E(x)=1+\sum_{i \in
  E}x^i=1+\tfrac{x^{2k}}{1-x^2}$. It follows that
$\phi'_E(x)=(2kx^{2k-1}-(2k-2)x^{2k+1})/(1-x^2)^2$, and the
characteristic equation $\phi_E(x)-x\phi'_E(x)=0$ is equivalent to
$P(x)=0$. The radius of convergence of $\phi_E$ is 1 and since
$P(0)=1$ and $P(1)=-2$ the polynomial $P$ has a root $\tau$ in the
open interval $(0,1)$. By Lemma~\ref{lem:dyck}, this is the unique
root in $(0,1)$. Lemma~\ref{lem:dyck} also implies that for some
constant $c_E$, $C_{t,E}\le c_E\, \gamma^t \,t^{-3/2}$, where
$\gamma=\phi'_E(\tau)=\phi_E(\tau)/\tau=
(\tau^{2k}-\tau^{2}+1)/(\tau-\tau^3)$. 

In order to prove the theorem, we just need to show the existence of a
vector $F \in \{1,\ldots,\lceil\gamma (\Delta-1)\rceil\}^t$ such that
the algorithm taking $G$ and $F$ as inputs yields an acyclic
edge-coloring of $G$. In other words, all the edges are colored at
step $t$. As before, let $\mathcal F_t$ be the set of vectors $F$ for
which some edges remain uncolored at step $t$, and let $m$ be the
number of edges of $G$.  By Lemma~\ref{lem:cor2}, $|\mathcal F_t|\le
(\lceil(2+\gamma)(\Delta-1)\rceil+1)^m (\Delta-1)^t |\mathcal
R^\circ_t|$. Observe that for any $R\in \mathcal R_t$, the number of
0's and 1's in each prefix of $R^\circ$ differ by at most $m-1$, since
at most $m-1$ edges are colored at each step of the algorithm. By
Lemmas~\ref{lem:partialdyck} and~\ref{lem:countingpartial}, it implies
that $ |\mathcal R^\circ_t| \leq \sum_{r=0}^{m-1} C_{t+r(2k-1),E} \leq
c'_E\, \gamma^{t+m(2k-1)} \,t^{-3/2}$, where
$c'_E=c_E/(\gamma^{2k-1}-1)$. It follows that $|\mathcal F_t|\le c'_E
(\lceil(2+\gamma)(\Delta-1)\rceil+1)^m (\Delta-1)^t\gamma^{t+m(2k-1)}
t^{-3/2}$, and $|\mathcal F_t|/\lceil\gamma
(\Delta-1)\rceil^t$ tends to 0 as $t$ goes to infinity. In particular,
for $t$ large enough $|\mathcal F_t| < \lceil\gamma
(\Delta-1)\rceil^t$, which means that for some vector $F$ the
algorithm terminates in less than $t$ steps and yields an acyclic
edge-coloring of $G$ with at most $\lceil(2+\gamma) (\Delta-1)\rceil$
colors.
\end{proof}

Muthu {\it et al.}~\cite{MNS07} proved in 2007 that graphs of maximum
degree at most $\Delta$ and girth at least 9 have an acyclic
edge-coloring with at most $6\Delta$ colors, and for graphs with girth
at least 220 the bound was improved to $4.52 \,\Delta$. Ndreca {\it et
  al.}~\cite{NPS12} recently showed the following bounds for the
acyclic edge-coloring of graphs $G$ with maximum degree $\Delta$ and
girth at least $g$: $a'(G)\le \lceil 9.62\,(\Delta-1)\rceil$,
$a'(G)\le \lceil 6.42\,(\Delta-1)\rceil $ if $g\ge 5$, $a'(G)\le
\lceil 5.77\,(\Delta-1)\rceil$ if $g\ge 7$, and $a'(G)\le \lceil 4.52
\,(\Delta-1)\rceil$ if $g\ge 53$. The following direct corollary of
Theorem~\ref{th:main} significantly improves all these bounds.

\begin{corollary}\label{cor:main}
Let $G$ be a graph with maximum degree $\Delta$ and girth $g$. Then
\begin{enumerate}
\item $a'(G)\le 4\Delta-4$;
\item if $g\ge 7$, $a'(G)\le \lceil 3.74 \,(\Delta-1)\rceil$;
\item if $g\ge 53$, $a'(G)\le \lceil 3.14 \,(\Delta-1)\rceil$;
\item if $g\ge 220$, $a'(G)\le \lceil 3.05 \,(\Delta-1)\rceil$.
\end{enumerate}
\end{corollary}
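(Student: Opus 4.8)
The plan is to derive Corollary~\ref{cor:main} directly from Theorem~\ref{th:main} by plugging in the appropriate values of the girth parameter $\ell$ and evaluating the resulting algebraic quantities. For item~1, I would take $\ell = 1$, so that $k = \max(2,1) = 2$; the polynomial becomes $P(x) = x^6 - 3x^4 + x^4 - 2x^2 + 1 = x^6 - 2x^4 - 2x^2 + 1$ (after substituting $2k-3 = 1$, $1-2k = -3$), and I need to locate its unique root $\tau \in (0,1)$ numerically, then compute $\gamma = (\tau^4 - \tau^2 + 1)/(\tau - \tau^3)$ and check that $(2+\gamma)(\Delta-1) \le 4\Delta - 4 = 4(\Delta-1)$, i.e.\ that $\gamma \le 2$. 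So the content of item~1 is the numerical claim that the relevant root gives $\gamma \le 2$, making the ceiling $\lceil (2+\gamma)(\Delta-1)\rceil \le \lceil 4(\Delta-1)\rceil = 4\Delta - 4$.

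For items~2, 3, and~4 I would repeat the same recipe with the values of $\ell$ forced by the girth hypotheses. Since girth at least $g$ means girth at least $2\ell+1$ for $\ell = \lfloor (g-1)/2 \rfloor$, I take $\ell = 3$ (hence $k = 3$) for $g \ge 7$, $\ell = 26$ (hence $k = 26$) for $g \ge 53$, and $\ell = 109$ (hence $k = 109$) for $g \ge 220$. In each case I solve $P(x) = 0$ on $(0,1)$ for the corresponding $k$, evaluate $\gamma = (\tau^{2k} - \tau^2 + 1)/(\tau - \tau^3)$, and verify the numerical inequalities $\gamma \le 1.74$, $\gamma \le 1.14$, and $\gamma \le 1.05$ respectively, so that $\lceil (2+\gamma)(\Delta-1)\rceil \le \lceil c\,(\Delta-1)\rceil$ for $c = 3.74, 3.14, 3.05$.

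The only genuine obstacle is the numerical verification: one must check that the roots of these specific polynomials are small enough. A clean way to package this, avoiding a full numerical computation in the proof, is to exhibit an explicit rational (or simple algebraic) upper bound $\tau \le \tau_0$ and then use monotonicity of $x \mapsto (x^{2k} - x^2 + 1)/(x - x^3)$ near $0$ to bound $\gamma$; alternatively, since $P(\tau_0) < 0 = P(\tau) < P(0)$ and $P$ has $\tau$ as its only root in $(0,1)$ by Theorem~\ref{th:main}, any $\tau_0$ with $P(\tau_0) < 0$ and $\tau_0 < 1$ certifies $\tau < \tau_0$, and then substituting $\tau_0$ into the (decreasing, for small $x$) formula for $\gamma$ gives the bound. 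For large $k$ (items 3 and 4), note that $\tau^{2k}$ is negligible, so $\gamma \approx (1 - \tau^2)/(\tau(1-\tau^2)) = 1/\tau$, and the root $\tau$ is close to the root of $x^4 - 2x^2 + 1 = (x^2-1)^2$ shifted by the $(2k-3)x^{2k+2} + (1-2k)x^{2k}$ terms; making this precise gives $\tau \to 1$ and $\gamma \to 1$ as $k \to \infty$, consistent with the bounds approaching $3(\Delta-1)$.

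Concretely, then, the proof is short: state ``This follows from Theorem~\ref{th:main} by taking $\ell = 1, 3, 26, 109$ respectively and bounding the root $\tau$ of $P$.'' For each case, give $\tau$ to enough decimal places (or an explicit bracketing interval verified by evaluating $P$ at the endpoints), compute $\gamma$, and conclude with the ceiling inequality. The bulk of the writeup is just tabulating four numerical evaluations; the conceptual work was already done in Theorem~\ref{th:main} and Lemma~\ref{lem:dyck}.
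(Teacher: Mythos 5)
Your approach is correct and is essentially the one the paper takes: the corollary is stated as an immediate consequence of Theorem~\ref{th:main}, and the paper simply records the numerical values in Table~\ref{tab1}. Your identifications $\ell = 1, 3, 26, 109$ (hence $k = 2, 3, 26, 109$ and $E = 2\mathbb{N} + 4, 2\mathbb{N}+6, 2\mathbb{N}+52, 2\mathbb{N}+218$) match the table exactly, and for item~1 the root $\tau = \tfrac12(\sqrt{5}-1)$ satisfies $\tau^2 = 1-\tau$, from which $\gamma = 2$ follows exactly by a short algebraic computation.

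One caution on the verification strategy you sketch. You propose to certify $\tau < \tau_0$ via $P(\tau_0) < 0$ (fine) and then substitute $\tau_0$ into the formula $f(x) = (x^{2k}-x^2+1)/(x-x^3)$, invoking that $f$ is ``decreasing for small $x$.'' As stated this points in the wrong direction: if $f$ were decreasing, then $\tau < \tau_0$ would give $f(\tau) > f(\tau_0)$, a \emph{lower} bound on $\gamma$, not an upper bound. The correct justification is simpler and does not require locating $\tau$ at all: $\tau$ is the unique critical point of $f$ on $(0,1)$ and $f \to +\infty$ at both endpoints (its numerator is positive at $0$ and at $1$), so $\tau$ is the global minimizer and $\gamma = f(\tau) = \min_{x\in(0,1)} f(x)$. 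Hence evaluating $f$ at any convenient point of $(0,1)$ gives a rigorous upper bound on $\gamma$, and the tabulated inequalities follow. With this small repair, the rest of your argument (including the ceiling manipulations) is sound.
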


The constants appearing in the computations leading to
Corollary~\ref{cor:main} are given in Table~\ref{tab1}.

\begin{table}
\centering
\begin{tabular}{|c|c|c|c|c|}
  \hline $g$ & $E$ & $P(x)$ & $\tau$ & $\gamma$ \\ \hline\hline 3 & $2
  \mathbb{N}+4$ & $x^6-2x^4-2x^2+1$ & $\tfrac12(\sqrt{5}-1)$ & 2
  \\\hline 7 & $2 \mathbb{N}+6$ & $3x^8-5x^6+x^4-2x^2+1$ &0.66336 &
  1.73688 \\\hline 53 & $2 \mathbb{N}+52$ &
  $49x^{54}-51x^{52}+x^4-2x^2+1$ &0.89610 & 1.13481 \\\hline 220 & $2
  \mathbb{N}+218$ & $215x^{220}-217x^{218}+x^4-2x^2+1$ & 0.96341 &
  1.04225 \\ \hline
\end{tabular}\caption{Computations in
  Corollary~\ref{cor:main}}\label{tab1} 
\end{table}

\section{Star coloring}\label{sec:star}

We now apply the analysis of the algorithm to star coloring of
graphs. A \emph{star coloring} of a graph $G$ is a proper coloring of
its vertices such that any two color classes induce a forest of
stars. Equivalently, every path on four vertices contains at least three
colors. The \emph{star chromatic number} of a graph $G$, denoted by
$\chi_s(G)$, is the minimum number of colors in a star coloring of
$G$. Fertin {\it et al.}~\cite{FRR04} proved that for every graph $G$
with maximum degree $\Delta$, $\chi_s(G)\le 20 \Delta^{3/2}$, and
that this bound is best possible up to a polylogarithmic factor: for
some absolute constant $C$, there are graphs with maximum degree
$\Delta$ requiring $C\, \Delta^{3/2}/(\log \Delta)^{1/2}$ colors in
any star coloring. Recently, Ndreca {\it et al.}~\cite{NPS12} showed
that for every graph $G$ with maximum degree $\Delta$, $\chi_s(G)\le
4.34 \,\Delta^{3/2}+1.5 \,\Delta$.

We will show how to divide this bound by $\tfrac32$
using a variant of the algorithm analysed in this paper. Instead of
considering star coloring, we will consider the following more general
concept: a \emph{star-$k$ coloring} of a graph $G$ is a proper
vertex-coloring of $G$ such that every path on $2k$ vertices contains at least three colors. A star coloring is the same as a star-2 coloring.

\begin{theorem}
For every $k\ge 2$, every graph $G$ with maximum degree $\Delta$ has a
star-$k$ coloring with at most
$C_{2k-2}\,k^{\frac1{2k-2}}\,\Delta^{\frac{2k-1}{2k-2}}+\Delta$
colors, where $C_\ell=\ell\,(\ell-1)^{\frac{1}{\ell}-1}$.
\end{theorem}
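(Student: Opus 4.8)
The plan is to run, for vertices, the exact analogue of the algorithm and of the analysis of Section~\ref{sec:analysis}, the only structural novelty being that the forbidden configuration (a $2$-coloured path on $2k$ vertices) has a \emph{fixed} size. Fix $K=\Delta+\lceil C_{2k-2}\,k^{1/(2k-2)}\Delta^{(2k-1)/(2k-2)}\rceil$ and order the vertices as $v_1,\dots,v_n$; take $F\in\{1,\dots,K-\Delta\}^t$. At step $i$, I would pick the uncoloured vertex $v_j$ of smallest index, let $S\subseteq\{1,\dots,K\}$ be the set of colours not used on a neighbour of $v_j$ (so $|S|\ge K-\Delta$), and colour $v_j$ with the $F_i$-th smallest element of $S$; this keeps the colouring proper. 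If colouring $v_j$ creates a $2$-coloured path on $2k$ vertices, then such a path must contain $v_j$, since the colouring before step $i$ had none; I would then pick one such path $P$, record it, and uncolour all of its vertices except two chosen by a fixed rule so that both differ from $v_j$ and lie at positions of opposite parity along $P$ (possible because $2k\ge 4$). Because $v_j$ is uncoloured, every newly created bad path is destroyed, so the colouring stays a valid partial star-$k$ colouring, and the two kept vertices determine the whole bicolouring of $P$.

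Next I would set up the record $R$ as in the paper: $R_i$ is empty if no bad path appears at step $i$, and otherwise $R_i=\ell$, the index of $P$ in a fixed ordering of all paths on $2k$ vertices of $G$ through $v_j$. A crude count — choose the position of $v_j$ on the path ($2k$ choices), extend with at most $\Delta$ choices at each further vertex, and divide by $2$ for the orientation — gives at most $k\,\Delta^{2k-1}$ such paths, so $\ell\le k\Delta^{2k-1}$; and each bad step uncolours exactly $2k-2$ vertices (including $v_j$). The analogues of Lemmas~\ref{lem:uncolored} and~\ref{lem:input} then go through verbatim: $(R_j)_{j\le i}$ determines the set of uncoloured vertices, and $(R_j)_{j\le i}$ together with $\Phi_i$ determines $(F_j)_{j\le i}$ (in the bad-step case one recovers the ephemeral colour of $v_j$ from the recorded path, exactly as for edges). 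Hence $|\mathcal F_t|\le (K+1)^n\,|\mathcal R_t|$, as in Lemma~\ref{lem:cor}. I expect the genuinely delicate part of the write-up to be this paragraph: namely checking that a single coloured vertex can only create bad paths through it, and that keeping two opposite-parity vertices of $P$ is exactly what is needed (no more, no less) to reconstruct $\Phi_{i-1}$ from $\Phi_i$, so that every descent of the associated word has length precisely $2k-2$.

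I would then pass to the $\{0,1\}$-word $R^\circ$ exactly as in the paper. By the previous paragraph and the reasoning of Lemma~\ref{lem:partialdyck}, $R^\circ$ is a partial Dyck word with $t$ zeros and $t-r$ ones, where $r\le n-1$ is the number of coloured vertices after step $t$, and all of its descents have length exactly $2k-2$. Since the record consists of $(t-r)/(2k-2)$ indices, each in $\{1,\dots,k\Delta^{2k-1}\}$, the fibre of $R\mapsto R^\circ$ has size at most $(k\Delta^{2k-1})^{(t-r)/(2k-2)}=\bigl(k^{1/(2k-2)}\Delta^{(2k-1)/(2k-2)}\bigr)^{t-r}$, so $|\mathcal R_t|\le\bigl(k^{1/(2k-2)}\Delta^{(2k-1)/(2k-2)}\bigr)^{t}\,|\mathcal R^\circ_t|$. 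Now take $E=\{2k-2\}$ (a singleton, since all descents have this length): here $\phi_E(x)=1+x^{2k-2}$, the characteristic equation $\phi_E(x)-x\phi_E'(x)=1-(2k-3)x^{2k-2}=0$ has the positive root $\tau=(2k-3)^{-1/(2k-2)}$, and Lemma~\ref{lem:dyck} gives $C_{t,E}\le c_E\,\gamma^t\,t^{-3/2}$ with $\gamma=\phi_E'(\tau)=(2k-2)(2k-3)^{1/(2k-2)-1}=C_{2k-2}$. Combining this with Lemma~\ref{lem:countingpartial} (applied with $s=2k-2$, summing over $r\le n-1$) bounds $|\mathcal R^\circ_t|$ by $c'_E\,C_{2k-2}^{\,t+n(2k-3)}\,t^{-3/2}$.

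Putting the estimates together yields
\[
|\mathcal F_t|\ \le\ c'_E\,(K+1)^n\,C_{2k-2}^{\,n(2k-3)}\;\bigl(C_{2k-2}\,k^{1/(2k-2)}\Delta^{(2k-1)/(2k-2)}\bigr)^{t}\,t^{-3/2}.
\]
By the choice of $K$ we have $C_{2k-2}\,k^{1/(2k-2)}\Delta^{(2k-1)/(2k-2)}\le K-\Delta$, so the right-hand side is $o\bigl((K-\Delta)^t\bigr)$, i.e.\ eventually strictly smaller than the total number $(K-\Delta)^t$ of input vectors $F$. Hence for $t$ large there is an $F$ for which the algorithm colours every vertex, producing a star-$k$ colouring of $G$ with $K\le C_{2k-2}\,k^{1/(2k-2)}\Delta^{(2k-1)/(2k-2)}+\Delta$ colours. (For $k=2$ one has $C_2=2$, recovering the bound $2\sqrt2\,\Delta^{3/2}+\Delta$ for ordinary star colouring.) The only nontrivial work beyond transcribing Section~\ref{sec:analysis} is the bookkeeping of the uncolouring rule mentioned above and the elementary count $\ell\le k\Delta^{2k-1}$, which is what produces the constant in the statement.
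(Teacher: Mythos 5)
Your proposal is correct and follows essentially the same approach as the paper's proof: the same vertex-coloring algorithm (color from the complement of the neighborhood's colors, uncolor all but two vertices of any freshly created $2$-coloured path through $v_j$), the same record, the same reduction to Dyck words with all descents of length exactly $2k-2$, the same $E=\{2k-2\}$, $\phi_E(x)=1+x^{2k-2}$, $\tau=(2k-3)^{-1/(2k-2)}$, and $\gamma=\phi'_E(\tau)=C_{2k-2}$. The only cosmetic differences are that you keep two vertices of opposite parity (the paper keeps two consecutive vertices, which is the same thing in effect since consecutive positions have opposite parity and either choice determines the bicolouring), you store the path index as a single symbol rather than a length-$(2k-2)$ word over a smaller alphabet (an equivalent encoding yielding the same fibre size $(k\Delta^{2k-1})^{(t-r)/(2k-2)}$), and you spell out the use of Lemma~\ref{lem:countingpartial} which the paper compresses into ``the analysis is the same as above.''
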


\begin{proof}
Let $\ell=2k-2$ and $K=C_{\ell}\,k^{\frac1{\ell}}\,\Delta^{1+\frac1{\ell}}$. We
order the vertices as $v_1,\ldots,v_n$, and at each step we consider
the non-colored vertex with smallest index, say $v_j$, pick a random
integer $r$ in $1,\ldots,K$, and assign $v_j$ the $r$-th color in the
set $\{1,\ldots,K+\Delta\}$ that does not appear in the neighborhood of
$v_j$. If some path of length $2k$ is now 2-colored, we choose such a
path and uncolor $v_j$ and all the other vertices on the path, except
two consecutive ones. Hence, the coloring remains a star-$k$
coloring at each step.  The analysis is the same as above. The two
vertices on the 2-colored path that are not uncolored are enough to
recover the colors of all the other vertices on the path, including
$v_j$. It follows that the complete record until step $i$ together
with the partial coloring at step $i$ are enough to deduce all the
random choices until step $i$. Hence, we only need to show that their
are $o(K^t)$ possible complete records at step $t$.

Every vertex is contained in at most $k \Delta^{2k-1}$ paths on $2k$
vertices, so the 2-colored path containing $v_j$ that will be
partially uncolored at this step can be recorded using a word of
length $\ell=2k-2$ on the alphabet
$1,\ldots,k^{\frac1{\ell}}\,\Delta^{1+\frac1{\ell}}$ ($\ell$ is
precisely the number of vertices that are uncolored at this
step). Applying the same morphism as in Section~\ref{sec:analysis} we
obtain a partial Dyck word in which every descent has length precisely
$\ell$.

It can be proven fairly easily with a bijective argument that the
number of Dyck words of length $2t$ in which every descent has length
precisely $\ell$ is $\frac{1}{t+1}{t+1 \choose t/\ell}$ and,
using Stirling formula, its asymptotic value is $ct^{-3/2}C_{\ell}^t$
where $c$ is a constant. We omit the details, and rather present how
these asymptotics can be directly deduced from the framework of
Section~\ref{sec:analysis}. In this framework, we have $E=\{\ell\}$ and we
want the asymptotic behavior of $C_{t,E}$. We have
$\phi_E(x)=1+x^\ell$, whose radius of convergence is $+\infty$. Then
$\tau=(\ell-1)^{-1/\ell}$ is the only solution of the characteristic
equation $\phi_E(x)-x\phi'_E(x)=0$ in the interval
$(0,+\infty)$. Since $\phi'_E(\tau)=\ell (\ell-1)^{1/\ell-1}$,
Lemma~\ref{lem:dyck} implies that for some some constant $c$,
$C_{t,E}\le c\, C_\ell^t \,t^{-3/2}$.

We can now conclude that for some constant $c'$ depending only on $c$
and the number of vertices of $G$, the number of possible records of
the algorithm after $t$ steps is at most $c'\, C_\ell^t \,t^{-3/2} \,
(k^{\frac1{\ell}}\,\Delta^{1+\frac1{\ell}})^t= c'\,t^{-3/2}\, K^t$. It
follows that $G$ has a star-$k$ coloring with $K+\Delta$ colors.
\end{proof}

This theorem has the following immediate corollary, improving on~\cite{NPS12}.

\begin{corollary}\label{cor:star}
For every graph $G$ with maximum degree $\Delta$, $\chi_s(G) \le 2
\sqrt{2} \,\Delta^{3/2}+\Delta$.
\end{corollary}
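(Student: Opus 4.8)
The plan is to specialize the star-$k$ coloring theorem to $k=2$. Here a star-$2$ coloring is exactly a star coloring, so the theorem yields $\chi_s(G) \le C_{2k-2}\,k^{1/(2k-2)}\,\Delta^{(2k-1)/(2k-2)}+\Delta$ with $k=2$, hence with $\ell = 2k-2 = 2$. First I would evaluate $C_\ell = \ell(\ell-1)^{1/\ell - 1}$ at $\ell = 2$: this gives $C_2 = 2\cdot 1^{1/2 - 1} = 2$. Then the exponent on $\Delta$ is $(2k-1)/(2k-2) = 3/2$, and the factor $k^{1/(2k-2)}$ becomes $2^{1/2} = \sqrt 2$. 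Substituting these into the bound of the theorem gives $\chi_s(G) \le 2\cdot\sqrt 2\cdot\Delta^{3/2} + \Delta = 2\sqrt 2\,\Delta^{3/2} + \Delta$, which is exactly the claimed inequality.

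So the entire argument is just the $k=2$ instantiation of the preceding theorem together with the observation that star coloring coincides with star-$2$ coloring (every path on $2k = 4$ vertices contains at least three colors is precisely the definition of star coloring recalled at the start of Section~\ref{sec:star}). There is essentially no obstacle: the only thing to check carefully is the arithmetic $C_2 = 2$ and $k^{1/(2k-2)} = \sqrt 2$, and that the combined coefficient of $\Delta^{3/2}$ is $2\sqrt 2$ rather than, say, $\sqrt 2$ alone — one must not drop the factor $C_\ell$. I would also note in passing (for the reader's benefit) that $2\sqrt 2 \approx 2.83 < 4.34$, so this indeed improves the Ndreca \emph{et al.}~\cite{NPS12} bound $4.34\,\Delta^{3/2} + 1.5\,\Delta$ by roughly the promised factor $\tfrac32$, and that the lower order term is improved as well (from $1.5\Delta$ to $\Delta$).

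Since the theorem is already proved and the corollary is a pure substitution, the write-up is a single sentence; no induction, generating functions, or Dyck-word counting need to be revisited. The main "work" has all been done in establishing the star-$k$ theorem, whose proof parametrizes everything through $E = \{\ell\}$, $\phi_E(x) = 1 + x^\ell$, $\tau = (\ell-1)^{-1/\ell}$, and $\gamma = \phi_E'(\tau) = C_\ell$; plugging $\ell = 2$ into that chain is routine and I would not belabor it.
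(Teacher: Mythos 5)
Your proof is correct and is precisely the paper's intended argument: the corollary is the $k=2$ instance of the star-$k$ theorem, and the arithmetic $C_2=2$, $k^{1/(2k-2)}=\sqrt2$, exponent $3/2$ gives the stated bound. Nothing more is needed.
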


\section{Conclusion}

\subsection{Extensions}\label{sub:ext}

The method presented in this paper can be applied to any
vertex-coloring (or edge-coloring) that can be defined as a coloring
where some configurations of colors are forbidden. By a
\emph{configuration}, we mean a graph $H_i$ with a specific vertex-
(or edge-) coloring $c_i$, and we seek a coloring $c$ of a graph $G$,
such that for any $i$, and any copy $H$ of $H_i$ in $G$, the
restriction of the coloring $c$ to $H$ is not congruent to $c_i$ (two
colorings of the same graph are \emph{congruent} if one can be
obtained from the other one by a permutation of the color names). For
instance, in the case of star coloring, there would only be two
configurations: $H_1$ (a single edge with both ends having the same
color) and $H_2$ (a properly 2-colored path on 4 vertices).

Assume that for any vertex $v$ of $H_i$, there are $k_i$ fixed
vertices different from $v$ in $H_i$ for which, if we know their
color, there is a unique way to extend this partial coloring to a
coloring of $H_i$ congruent to $c_i$.  For any $i$, let
$\ell_i=|V(H_i)|-k_i$, and let $E=\{\ell \in \mathbb N \, |\, \exists
i, \ell_i=\ell\}$. For $\ell \in E$, let $d_\ell$ be the maximum over
all vertices $v$ of $G$, of the number of subgraphs containing $v$ and
isomorphic to some $H_i$ with $\ell_i=\ell$. Let $\gamma$ be defined
as in Lemma~\ref{lem:dyck} using this set $E$. Using the same analysis
as before, we can prove that there is a coloring of the graph with
$\gamma\cdot \sup_{\ell \in E} d_\ell^{1/\ell}$ colors, so that no
copy of $H_i$ as a coloring congruent to $c_i$, for any $i$.

\paragraph{Example 1: Star coloring} Taking $H_1$ and $H_2$ as
defined above, we obtain $k_1=1$ and $k_2=2$, and thus $\ell_1=1$ and
$\ell_2=2$. It follows that $E=\{1,2\}$, and so $\gamma=3$, and if $G$
has maximum degree $\Delta$ we have $d_1\le\Delta$ and
$d_2\le2\Delta^3$. It follows that $\chi_s(G)\le
3\sqrt{2}\,\Delta^{3/2}$. This is not as good as the bound of
Corollary~\ref{cor:star}, though. The reason is that in the previous
section we did not consider $H_1$ and used a different (and less
expensive) tool to keep the coloring proper at any step.

\paragraph{Example 2: Nonrepetitive coloring} Here all paths on an
even number of vertices where the sequence of colors of the first half
of the path is repeated on the second half are forbidden. If we only
consider paths on 2 and 4 vertices, this corresponds exactly to star
coloring. The forbidden configurations are paths $H_i$ of length $2i$,
$i\ge 1$, with colorings $c_i$ such that for any two vertices $x$ and
$y$ at distance $i$ in $H_i$, $c_i(x)=c_i(y)$. We obtain that for each
$i\ge 1$, $k_i=\ell_i=i$ and $E=\mathbb N +1$. It implies that
$\phi_E(x)=1+\tfrac{x}{1-x}$, which yields a constant $\gamma=4$ in the
computation of Lemma~\ref{lem:dyck}. For any $i\ge 1$, we have $d_i\le
i\, \Delta^{2i-1}$, therefore every graph of maximum degree $\Delta$ has a
nonrepetitive coloring with $4\cdot \sup_{\ell\ge 1} \{\ell^{1/\ell}
\Delta^{2-1/\ell}\}\leq (4+o(1))\Delta^2$. In~\cite{DJKW12}, the
authors analysed this randomised procedure more precisely and obtained
a bound of $(1+o(1))\Delta^2$.

\paragraph{Example 3: Acyclic edge-coloring} In this last example, we
compare the bound obtained by a direct application of the framework
above with the bound proved in Theorem~\ref{th:main}. The forbidden
configurations are $H_1$ (a path on two edges having the same color),
and for any $i\ge 2$, a properly 2-colored cycle $H_i$ on $2i$
edges. We obtain that $k_1=\ell_1=1$ and for each $i\ge 2$, $k_i=2$
and $\ell_i=2i-2$. It implies that $E=\{1\} \cup 2\mathbb N +2$ and
$\phi_E(x)=1+x+\tfrac{x^2}{1-x^2}$, which yields a constant
$\gamma=3.6$ in the computation of Lemma~\ref{lem:dyck}. We have
$d_1\le 2\Delta$ and for any $i\ge 2$, $d_{2i-2}\le \Delta^{2i-2}$,
therefore every graph of maximum degree $\Delta$ has an acyclic
edge-coloring with $3.6\cdot 2 \Delta=7.2\,\Delta$ colors. This is of
course not as good as the bound of Theorem~\ref{th:main}, in which
small configurations are taken care of in a different way to minimize
their influence on the final bound.

\medskip

The algorithm and the different bounds in the applications have been
formulated in terms of coloring for the sake of clarity but it is not
difficult to see that everything works in the more general context of
list coloring. Hence, all the bounds obtained here also hold for
acyclic/star choosability.

\subsection{Algorithmic remarks}\label{sub:algo}

By Corollary~\ref{cor:main}.1, the acyclic chromatic index of every
graph $G$ with maximum degree $\Delta$ is at most $4 \Delta-4$. To
prove this result, we showed that for $t$ large enough, our random
procedure colors $G$ in at most $t$ steps with non-zero probability
(more precisely, with probability tending to 1 as $t\rightarrow
\infty$). In the proof of Theorem~\ref{th:main} the value $t$ for
which this probability is non-zero is exponential in the number of
edges, but if we allow one more color (i.e. we take $K=4\Delta-3$
instead of $4\Delta-4$) we obtain that the probability that the
algorithm stops in at most $t\geq t_0$ steps (with $t_0=\frac{m\log(32
  \Delta)}{\log(1+1/2\Delta)}$) is at least $$1-\frac{(4\Delta-2)^{m}
  (\Delta-1)^t \,2^{t+3m}}{(2\Delta-1)^t}\geq
1-\frac{(32\Delta)^{m}}{(1+\tfrac1{2\Delta})^t}=1-e^{-\lambda
  (t-t_0)}$$ where $\lambda = \log(1+\tfrac1{2\Delta})$. This
corresponds to an exponential distribution, therefore the expected
number of steps is at most $t_0+\frac{1}{\lambda} =\frac{m\log(32
  \Delta)+1}{\log(1+1/2\Delta)}=O(m \Delta \log \Delta)$.

The previous remark also holds in the full generality of
Subsection~\ref{sub:ext}: if one allows one more color than the number
of colors guaranteed by the general technique, then the expected
number of steps becomes polynomial in the size of the graph. The issue
is that in general, there is no clear way to perform each step of the
algorithm in polynomial time. An example is the case of nonrepetitive
coloring considered in the previous subsection. It was proved
in~\cite{MS09} that deciding whether a given coloring of a graph is
nonrepetitive is Co-NP-complete, so there is no polynomial time
algorithm finding a repetitive path in a colored graph unless P=NP.

However in the case of acyclic edge-coloring each step can be
performed in time $O(n \Delta)$ (the time it takes to find a 2-colored
cycle containing a given edge, if such a cycle exists, in a graph with
a proper edge-coloring). We only need to modify slightly the way we
encode cycles in the record $R$ (for each vertex $u$, we label the
ordered pairs of neighbors $(u,v)$ by $1,\ldots,\Delta$, and a cycle
$u_1u_2\ldots u_{2k}u_1$ of length $2k$ containing the edge $u_1u_2$
is uniquely determined by the sequence of $2k-2$ labels of consecutive
ordered pairs $(u_2,u_3),(u_3,u_4),\ldots,(u_{2k-1}u_{2k})$). 

It follows that the overall expected running time is $O(mn \Delta^2
\log \Delta)$. In particular, if $\Delta$ is fixed, the expected
running time of our procedure is $O(n^2)$.

Note that this improves procedures producing an acyclic edge-coloring
in expected polynomial time given by Molloy and Reed in 1998 using $20
\Delta$ colors~\cite{MR98}, and recently by Haeupler, Saha, and
Srinivasan using $16\Delta$ colors~\cite{HSS11}. The latter result was
proved using a refined analysis of the constructive proof of Moser and
Tardos~\cite{MT10}.

\paragraph{Acknowledgement}

We would like to thank Mireille Bousquet-M\'elou for mentioning the
bijection between Dyck words and plane rooted trees, Jakub
Kozik for his suggestion improving our original bound of $(2+\tfrac32
\sqrt{3})\, \Delta\approx 4.6 \, \Delta$ to $4 \Delta$, and Zhentao Li
and Jan Volec for discussions related to algorithmic issues.


\end{document}